\def\figurename{Figure} 
\renewcommand{\fnum@figure}[1]{\figurename~\thefigure.}
\def\tablename{Table} 
\renewcommand{\fnum@table}[1]{\tablename~\thetable.}
\newtheorem{theorem}{Theorem}[section]
\newtheorem{proposition}[theorem]{Proposition}
\newtheorem{remark}[theorem]{Remark}
\newtheorem{Definition}[theorem]{Definition}
\numberwithin{equation}{section}
\def\P{\mathbb P}
\def\R{\mathbb R}
\def\E{\mathbb E}
\def\E{\mathbb E}
\def\N{\mathbb N}
\title{$L^{p}$-solutions of backward stochastic differential equations with time-delayed generators}
\author{Yong Ren $^{\,a,}$\footnote{renyong@126.com},\; J. M. Owo$^{\,b,}$\footnote{owo$_{-}$jm@yahoo.fr }\; and\; A. Aman$^{\, b,}$  \footnote{Corresponding author: augusteaman5@yahoo.fr / aman.auguste@ufhb.edu.ci}\\\\
a. Anhui Normal University, Department of Mathematics, Wuhu, Chine\;\;\; \;\;\;\;\;\;\;\;\;\;\;\;\;\;\;\;\;\;\;\;\;\;\;\;\\
b. Université Félix H. Boigny, UFR Mathématiques et Informatique,  Abidjan, C\^{o}te d'Ivoire}
\begin{document}
\maketitle
\date{}
\begin{abstract}
This article is devoted to study the class of backward stochastic differential equation with delayed generator. We suppose the terminal value and the generator to be $L^{p}$-integrable with $p>1$. We derive a new type of estimation related to this BSDE. Next, we establish the existence and uniqueness result in two ways. First, an approximation technics used by Briand et al. (Stochastic Process. Appl. 108 (2003) 109-129) and hence the well-know Picard iterative procedure. Using Picard iterative procedure, we revisit the result of  Dos Reis et al. (Stochastic Process. Appl. 121 (9) (2011) 2114-2150), simplifying the proof and give an explicit existence and uniqueness condition related to the Lipschitz constant $K$ and the terminal time $T$.
\end{abstract}

\vspace{.08in} \noindent {\bf 2000 MR Subject Classification:} 60H15; 60H20; 60H30\\
\vspace{.08in} \noindent \textbf{Keywords:} Backward stochastic differential equation; time delayed generator; $L^{p}$-solutions; $L^{\infty}$-approximation; Picard iterative method.

\section{Introduction}
The aim of this article is concerned with the problem of existence and uniqueness result for the backward stochastic differential equations with delayed generator
\begin{eqnarray}\label{eq1}
Y(t)=\xi +\int_{t}^{T}f(s,Y_{s},Z_{s})ds-\int_{t}^{T}Z(s)dW(s), 0\leq t\leq T,
\end{eqnarray}
where $W$ is a standard Brownian motion. $\xi$ denotes a $\sigma(W_s, 0\leq s\leq T)$-measurable random variable and $f$ called generator depends at each $s$ on $(Y_s,Z_{s})$ defined by $(Y_s,Z_{s})=( Y(s+u),Z(s+u) )_{-T\leq u \leq 0}$. Such equations have been introduced by Delong and Imkeller in \cite{Imk1} and with added Poisson random measures in \cite{Imk2} under delayed Lipschitz condition. Among others, an existence and uniqueness results are been established when the Lipschitz constant $K$ or the terminal time $T$ are small enough. This condition, considered as very restrictive, can unfortunately not be improved. Indeed, in \cite{Imk1}, they exhibited two examples of BSDEs with delayed generator that do not or admit multiple solutions if $K$ the Lipschitz condition or $T$ the terminal time are very large. Two years later, in \cite{Del}, Delong established the link between BSDEs with time delayed generator and the mathematic formulation of many problems in finance and insurance. Since, this kind of BSDE have been intensively studied. Among others one can cite these works of Ren et al., \cite{Ral}, Coulibaly and Aman,\cite{CA}, Tuo et al., \cite{Tal} and Dos Reis et al., \cite{Reis}. All this works have been done under $p$-integrability condition ( $p\geq 2$) on the data $\{f(t,0,0)\}_{t\in[0,T]}$ and $\xi$. In practice, an above condition is too restrictive to be assumed in many applications. For example, let state the following linear BSDE with a delayed generator
\begin{eqnarray}
Y(t)=Y_T +\int_{t}^{T}\int_{-T}^{0}(r(s+u)Y(s+u)+\theta(s+u)Z(s+u))\alpha(du)ds-\int_{t}^{T}Z(s)dW(s),\label{App}	
\end{eqnarray}
which can be considered as the dynamic of an investment portfolio management in insurance or finance. The process $r(t)$ and $\theta(t)$ which represent respectively the interest rate and the risk premium are not necessary bounded. Moreover, when the processes $r$ and $\theta$ are identically null, the solution of BSDE \eqref{App} is a martingale described by $\E(\xi|\mathcal{F}_t)$ that need to be only integrable. In this two previous context, all the above results do not match to study the existence of this model. On other hand, Dos Reis, in \cite{Reis}, established an existence and uniqueness result without more precise condition on time horizon and Lipschitz condition.
   
The aim of this paper is to correct the two previous gaps. Firstly, we briefly revisits the work of Dos Reis giving an explicit condition on the Lipschitz constant and terminal times. Moreover we simplify their proof. When data $\xi$ and $f(t,0,0)$ are  $p$-integrable condition, for any $p\in (1,2)$, we prove, in two different ways, an existence and uniqueness result for BSDEs \eqref{eq1}. To our knowledge, this result does not exist in literature, and hence it is new. On the other hand, the novelty of our two results lies in the fact that we use a new estimation method related to BSDEs with time delayed generator.

 We end this section with the plan of our paper. In Section 2, we give all notations and the setting of the problem. Section 3 contains essential a priori estimates. In Section 4, we prove the existence and uniqueness result.

\section{Notations and Setting of the Problem}
Throughout this paper, we consider the probability space $(\Omega,\mathcal{F},\mathbb{P})$ and $T$ a real and positive constant. We define on the above probability space, $W=(\displaystyle W_{t})_{(0\leq t\leq T)}$ a standard Brownian motion in values in $ \mathbb{R}^{d}$. Let denote $\mathcal{N}$ the class of $\P$-null sets of $\mathcal{F}$ and define
\begin{eqnarray*}
	\mathcal{F}_t=\mathcal{F}^{W}_t\vee\mathcal{N},
\end{eqnarray*}
 where $\mathcal{F}_{t}^{W}=\sigma(W_s,\; 0\leq s\leq T)$. The standard inner product of $\mathbb{R}^{k}$ is denoted by
$\langle \cdot,\cdot\rangle$ and the Euclidean norm by $|\cdot|$.  A
norm on $\mathbb{R}^{d\times k}$ is defined by
$\sqrt{Tr(zz^{\star})}$, where $z^{\star}$ is the transpose of
$z$ will also be denoted by $|\cdot|$.

Next for any real $p>1$ , let consider these following needed spaces.

\begin{description}
\item $\bullet$ Let $L^p(\mathbb{R}^k)$ be the space of $\R^k$-valued and $\mathcal{F}_T$-measurable random variables such that
\begin{eqnarray*}
	\E(|\xi|^p)<+\infty.
\end{eqnarray*}
\item $\bullet$ Let $\mathcal{S}^{p}(\mathbb{R}^{k})$ denote the set of $ \mathbb{R}^{k}$-valued, adapted and càdlàg processes $(\varphi(t))_{t\geq 0}$ such that
\begin{eqnarray*}
\mathbb{E}\left[\left(\sup_{0\leq t\leq T}|\varphi(t)|^{p}\right)\right]< + \infty.
\end{eqnarray*}
\item $\bullet$ Let $\mathcal{H}^{p}(\mathbb{R}^{k \times d})$ denote the set of predictable processes $(\psi(t))_{t\geq 0}$ with value in $\mathbb{R}^{k \times d}$ such that
\begin{eqnarray*}
\mathbb{E}\left[\left(\int_{0}^{T}|\psi(s)|^{2}ds\right)^{p/2}\right]<+\infty.
\end{eqnarray*}
\end{description}
\begin{remark}
We endow spaces $\mathcal{S}^{p}(\mathbb{R}^{k})$ and $\mathcal{H}^{p}(\mathbb{R}^{k \times d})$ respectively with norms $\|.\|_{\mathcal{S}^{p}(\mathbb{R}^{k})}$ and $\|.\|_{\mathcal{H}^{p}(\mathbb{R}^{k\times d})}$ defined by 
\begin{eqnarray*}
\|\varphi\|_{\mathcal{S}^{p}(\mathbb{R}^{k})}=\mathbb{E}\left[\left(\sup_{0\leq t\leq T}|\varphi(t)|^{p}\right)\right]^{1/p}
\end{eqnarray*}
and
\begin{eqnarray*}
\|\psi\|_{\mathcal{H}^{p}(\mathbb{R}^{k\times d})}=\mathbb{E}\left[\left(\int_{0}^{T}|\psi(s)|^{2}ds\right)^{p/2}\right]^{1/p},
\end{eqnarray*}
 that induce on them a Banach space structure.
\end{remark}
We also consider these two additive spaces
\begin{description}
\item $\bullet$ Let $L_{-T}^{2}(\mathbb{R}^{k \times d})$ denote the space of jointly measurable functions         $z:[-T,0]\rightarrow\mathbb{R}^{k \times d}$ such that
\begin{eqnarray*}
\int_{-T}^{0}|z(t)|^{2}dt<+\infty.
\end{eqnarray*}
\item $\bullet$ Let $L_{-T}^{\infty}(\mathbb{R}^{k})$ denote the space of bounded and  jointly measurable functions $y:[-T,0]\rightarrow\mathbb{R}^{k}$ such that
\begin{eqnarray*}
\sup_{-T\leq t\leq 0}|y(t)|^{2}<+\infty.
\end{eqnarray*}
\end{description}
We now recall our backward stochastic differential equation with time delayed generator
\begin{eqnarray}\label{A}
Y(t)=\xi+\int_t^T f(s,Y_s,Z_{s})\,{\rm d}s-\int_t^TZ(s)\,{\rm d}W(s),\quad 0\leq t\leq T,
\end{eqnarray}
where the process $(Y_s,Z_{s}) = (Y(s+u),Z(s+u) )_{-T \leq u\leq 0} $ is at each time $s$, the past values of the solution. In order to extend the solution to interval $[-T,0] $ we always suppose that $Y(s)=Y(0)$ and $Z(s)=0$ for $s<0$.

Now, we make the following assumptions on the data $(\xi,f)$.
\begin{description}
\item [({\bf H1})] $\xi\in L^{p}(\Omega, \mathcal{F};\R^k)$
\item [({\bf H2})] $f:\Omega\times [0,T]\times L_{-T}^{\infty}(\mathbb{R}^{k})\times L_{-T}^{2}(\mathbb{R}^{k\times d})\rightarrow \mathbb{R}^{k}$ be a progressively measurable function such   
\begin{itemize}
\item [$(i)$] $f$ is Lipschitz continuous in the sense that for some probability $\alpha$ defined on $([-T,0],\mathcal{B}([-T,0]))$, there exists a positive constant $K$ such that
\begin{eqnarray*}
\mid f(s,y_s,z_{s}) - f(s,y'_s ,z'_{s} )\mid^2
&\leq &  K\int_{-T}^0 ( \mid y(s+u) - y'(s+u) \mid^2+ \mid z(s+u) - z'(s+u) \mid^2\alpha (du),
\end{eqnarray*}
for $\P\otimes\lambda $-a.e. $(\omega ,s) \in \Omega \times [0,T]$ and for any $(y_s,z_{s}),(y'_s,z'_{s}) \in L_{-T}^{\infty} (\mathbb{R}^k) \times  L_{-T}^2 (\mathbb{R}^{k\times d})$,

\item[$(ii)$] $\displaystyle \mathbb{E}\left[\left(\int_{0}^{T}\left|f(t,0,0)\right|
^{2}dt\right)^{p/2}\right]<+\infty$,
\item[$(iii)$] $\displaystyle f(t,\cdot,\cdot)=0$, for $\ t<0$.
\end{itemize}
\end{description}

To end this section, let us tell what we mean by a solution to BSDE \eqref{A}.

\begin{Definition}
A $L^{p}$-solution of BSDE \eqref{A} is a pair of $\R^{k}\times\R^{k\times d}$-valued process $(Y,Z)$ which satisfies:
\begin{description}
\item[$(i)$] $(Y,Z)\in\mathcal{S}^{p}\times\mathcal{H}^{p}$,
\item[$(ii)$] $(Y,Z)$ satisfies \eqref{A}.
\end{description}
\end{Definition}

\section{A priori Estimates}

\setcounter{theorem}{0} \setcounter{equation}{0}
In this section, for $p>1$, we state a $L^p$-estimates concerning solutions of BSDEs with delayed generator \eqref{A}. We use a new technic without weights spaces and generalize both estimates appear respectively in \cite{Imk1} and \cite{Reis}. Better, these estimates, contrary to one appear in \cite{Reis}, give an explicit condition depending only on data (Lipschitz constant $K$, $T$ and level of integrability $p$) to assure existence and uniqueness result for BSDEs with delayed generator. Indeed, in \cite{Reis}, estimates used by author are more complicated so that authors obtain in Theorem 2.14 this following condition of existence and uniqueness:
\begin{eqnarray}\label{Reis}
2^{p/2-1}C_p\left(\alpha KT\int^0_{-T}e^{-\beta s}\rho(ds)\right)\max(1,T^{p/2})<1.
\end{eqnarray}
where $C_p$ is a constant depending on several parameters which choices seem very complex and require remark 2.11 and Z.12.

We begin with this result which permits us to control the process $Z$ in terms of the data and the process $Y$. 
\begin{proposition}\label{lm1}
Assume $\textbf{(H1)}$-$\textbf{(H2)}$ hold and $T$ and $K$ small enough. Let $(Y,Z)$ be a solution to the delayed BSDE \eqref{A} such that for $p>1$, $Y\in \mathcal{S}^{p}$. Then there exist a positive constant $C_{p}$ depending on
 $p,K,T$ such that
\begin{eqnarray*}
\mathbb{E}\left[\left(\int_{0}^{T}|Z(s)|^{2}ds\right)^{\frac{p}{2}}\right]\leq d_p\E(\sup_{0\leq t\leq T}|Y(t)|^{ p})+C_{p}\mathbb{E}\left[|\xi|^p+\left(\int_{0}^{T}|f(s,0,0)|^{2}ds\right)^{\frac{p}{2}}\right],
\end{eqnarray*}
where $d_p=[2(1-2TK)^{-1}(2K(T+1)+1)]^{p/2}$.
\end{proposition}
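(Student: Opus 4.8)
The plan is to run the classical energy (Itô) argument adapted to the delayed structure, and then pass to $L^{p/2}$-moments. First I would apply Itô's formula to $|Y(t)|^2$ on $[0,T]$. Since $(Y,Z)$ solves \eqref{A}, so that $dY(s)=-f(s,Y_s,Z_s)\,ds+Z(s)\,dW(s)$, this yields the energy identity
\[
|Y(0)|^2+\int_0^T|Z(s)|^2\,ds=|\xi|^2+2\int_0^T\langle Y(s),f(s,Y_s,Z_s)\rangle\,ds-2\int_0^T\langle Y(s),Z(s)\,dW(s)\rangle .
\]
Dropping the nonnegative term $|Y(0)|^2$ leaves a pathwise upper bound for $\int_0^T|Z(s)|^2\,ds$ in terms of $|\xi|^2$, the drift pairing, and the stochastic integral.

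Next I would control the drift pairing using the hypotheses on $f$. Taking $(y',z')=(0,0)$ in the delayed Lipschitz condition $(\mathbf{H2})(i)$ gives the pointwise bound $|f(s,Y_s,Z_s)|\le |f(s,0,0)|+\sqrt{K}\big(\int_{-T}^0(|Y(s+u)|^2+|Z(s+u)|^2)\,\alpha(du)\big)^{1/2}$, and then $2\langle Y,f\rangle\le 2|Y|\,|f|$ is split by Young's inequality with weights tuned to $T$ and $K$. The characteristic delayed step is to convert the integrated delay terms back into non-delayed quantities by Fubini's theorem together with the conventions $Y(s)=Y(0)$, $Z(s)=0$ for $s<0$ and the fact that $\alpha$ is a probability measure; this produces
\[
\int_0^T\!\!\int_{-T}^0|Z(s+u)|^2\,\alpha(du)\,ds\le\int_0^T|Z(s)|^2\,ds,\qquad \int_0^T\!\!\int_{-T}^0|Y(s+u)|^2\,\alpha(du)\,ds\le T\sup_{0\le t\le T}|Y(t)|^2 .
\]
Substituting these into the pathwise bound makes a multiple of $\int_0^T|Z|^2\,ds$ itself reappear on the right, with a coefficient of the form $2TK$; here the smallness assumption $2TK<1$ is exactly what lets me move that term to the left and divide by $(1-2TK)$. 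Organising the remaining $\sup|Y|^2$ contributions (the direct pairing together with the $Y$-delay term) then collects the coefficient $d=2(1-2TK)^{-1}(2K(T+1)+1)$, giving a pathwise inequality of the form $\int_0^T|Z|^2\,ds\le d\,\sup_{0\le t\le T}|Y(t)|^2+R$, where $R$ gathers $|\xi|^2$, $\int_0^T|f(s,0,0)|^2\,ds$ and the stochastic integral.

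Finally I would raise this inequality to the power $p/2$ and take expectations. The stochastic integral is handled by the Burkholder--Davis--Gundy inequality,
\[
\E\Big[\sup_{0\le t\le T}\Big|\int_0^t\langle Y(s),Z(s)\,dW(s)\rangle\Big|^{p/2}\Big]\le c_p\,\E\Big[\Big(\int_0^T|Y(s)|^2|Z(s)|^2\,ds\Big)^{p/4}\Big]\le c_p\,\E\Big[\big(\sup_{0\le t\le T}|Y(t)|\big)^{p/2}\Big(\int_0^T|Z(s)|^2ds\Big)^{p/4}\Big],
\]
followed by Young's inequality: the $\mathcal{H}^p$-norm of $Z$ produced here is reabsorbed into the left-hand side, while the rest is dominated by $\E[\sup|Y|^p]$ and the data. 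For $1<p<2$ the subadditivity $(x+y)^{p/2}\le x^{p/2}+y^{p/2}$ keeps the leading $\sup|Y|^p$ coefficient exactly $d^{p/2}$, so that $d_p=d^{p/2}$ emerges cleanly, and the remaining terms are absorbed into some $C_p=C_p(p,K,T)$ in front of $\E\big[|\xi|^p+(\int_0^T|f(s,0,0)|^2ds)^{p/2}\big]$.

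The main obstacle I anticipate is the bookkeeping in the two absorption steps. The first absorption (of the delayed $Z$-term) forces the hypothesis that $T$ and $K$ be small and produces the exact factor $(1-2TK)^{-1}$; choosing the Young weights so that the $\sup|Y|^2$ coefficient collapses to precisely $2(2K(T+1)+1)$ rather than a messier expression requires care. The second delicate point is ensuring that the BDG/Young step neither inflates the stated coefficient $d_p$ of $\E[\sup|Y|^p]$ nor prevents the reabsorption of the $\mathcal{H}^p$-norm of $Z$; the stochastic-integral contribution must be arranged to fall entirely into the reabsorbed $Z$-term and the constant $C_p$, which is where the explicit dependence on $p$, $K$ and $T$ is finally pinned down.
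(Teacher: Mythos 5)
Your strategy is essentially the paper's own: It\^o's formula applied to $|Y|^2$, the delayed Lipschitz bound combined with Fubini, a change of variables and the conventions $Y(s)=Y(0)$, $Z(s)=0$ for $s<0$, absorption of the delayed $Z$-term under $2KT<1$, then raising to the power $p/2$ and a BDG--Young argument. However, there is a genuine gap at the final absorption step. After BDG and Young you write that ``the $\mathcal{H}^p$-norm of $Z$ produced here is reabsorbed into the left-hand side'': this means subtracting $\tfrac12\,\E\bigl[\bigl(\int_0^T|Z(s)|^2\,ds\bigr)^{p/2}\bigr]$ from both sides of an inequality. That manipulation is only valid if this quantity is finite, and its finiteness is exactly what the proposition is asserting --- a priori you only know $\int_0^T|Z(s)|^2\,ds<\infty$ almost surely, since $Y\in\mathcal{S}^p$ is assumed but $Z\in\mathcal{H}^p$ is part of the conclusion. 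As written, the argument is circular (or vacuous, of the form $\infty\le\infty$). The paper avoids this by localizing: it introduces the stopping times $\tau_n=\inf\{t\in[0,T]:\int_0^t|Z(s)|^2\,ds\ge n\}\wedge T$, runs the entire argument on $[0,\tau_n]$, where $\int_0^{\tau_n}|Z(s)|^2\,ds\le n$ guarantees that every moment involved is finite and the absorption is legitimate, and then lets $n\to\infty$ by Fatou's lemma. Your proof needs this localization-plus-Fatou step; it is not a cosmetic omission. (Note also that the first, pathwise absorption of $2TK\int_0^T|Z|^2\,ds$ is fine without localization, since that quantity is a.s.\ finite; the problem arises only once expectations are taken.)

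A secondary overclaim concerns the constant. You assert that the Young weights can be tuned so that the BDG step ``neither inflates the stated coefficient $d_p$'' of $\E[\sup_t|Y(t)|^p]$ nor obstructs the reabsorption, so that $d_p=[2(1-2TK)^{-1}(2K(T+1)+1)]^{p/2}$ emerges exactly. This cannot be arranged: Young's inequality applied to $\E\bigl[Y_*^{p/2}\bigl(\int_0^T|Z|^2\,ds\bigr)^{p/4}\bigr]$ necessarily deposits a BDG-dependent multiple of $\E[Y_*^p]$ on the right-hand side, and the subsequent absorption of $\tfrac12\,\E\bigl[\bigl(\int_0^T|Z|^2\,ds\bigr)^{p/2}\bigr]$ multiplies every surviving coefficient by $2$, so the coefficient of $\E[Y_*^p]$ ends up at least $2d^{p/2}$ plus a BDG contribution. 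The paper's own proof exhibits exactly this inflation (its proof produces, e.g.\ for $p\ge 2$, $d_p=2^{p}\bigl(\frac{KT^2+1}{1-2KT}\bigr)^{p/2}+2^{2p+1}\bigl(\frac{p}{p-1}\bigr)^{p^2}\bigl(\frac{p(p-1)}{2}\bigr)^{p}\bigl(\frac{1}{1-2TK}\bigr)^{p}$, which does not match the $d_p$ stated in the proposition --- an inconsistency internal to the paper), so you should not claim that careful bookkeeping recovers the stated value; rather, the honest conclusion is an inequality of the stated form with some explicit $d_p$ and $C_p$ depending on $p$, $K$, $T$. Finally, note that the statement covers all $p>1$, and the BDG constant you invoke must be taken from different sources in the two regimes (the paper uses Karatzas--Shreve for $p\ge2$ and Ren's inequality for $1<p<2$), which forces the two-case structure the paper adopts.
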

\begin{proof}
Let consider a stopping time $(\tau_n)_{n\geq 0}$ defined by 
\begin{eqnarray*}
\tau_{n}= \inf\left\{t\in[0,T], \; \int_{0}^{t}|Z(s)|^{2}ds\geq n \right\}\wedge T.
\end{eqnarray*}
Ito's formula leads to
\begin{eqnarray}
|Y(0)|^{2}+\int_{0}^{\tau_{n}}|Z(s)|^{2}ds&=&|Y(\tau_{n})|^{2}+2\int_{0}^{\tau_{n}}\langle Y(s),f(s,Y_{s},Z_{s})\rangle ds\nonumber \\&&-2\int_{0}^{\tau_{n}}\langle Y(s),Z(s)dW(s) \rangle.\label{N1}
\end{eqnarray}
It follows from assumption (\textbf{H2})
\begin{eqnarray*}
&&2\int_0^{\tau_n}\langle Y(s),f(s,Y_{s},Z_{s})\rangle ds\\
&\leq & 2Y_{*}\int_0^{\tau_n}|f(s,Y_s,Z_s)|ds\\
&\leq & Y_*^2+T\int^{\tau_n}_0|f(s,Y_s,Z_s)|^2ds\\
&\leq & Y_*^2+2TK\int_0^{\tau_n}\left(\int_{-T}^{0}(|Y(s+u)|^2+|Z(s+u)|^2)\alpha(du)\right)ds+2\int^{T}_{0}|f(s,0,0)|^2ds,\\	
\end{eqnarray*}
where $\displaystyle Y_{*}=\sup_{0\leq t\leq T}|Y(t)|$.

Next, applying Fubini's theorem, a change of variables, the fact that $Z(t)=0$ and $Y(t) = Y(0)$ for $t < 0$, it follows that
\begin{eqnarray*}
&&2\int_{0}^{\tau_{n}}\langle Y(s),f(s,Y_{s},Z_{s})\rangle ds\\
&\leq &  Y_{*}^2+ 2TK\int_{-T}^0\left(\int_{u}^{\tau_{n}+u}(|Y(s)|^2+|Z(s)|^2)ds\right)\alpha(du)+2\int_{0}^{T}|f(s,0,0)|^2ds\\
&\leq & (2KT^2+1)\,Y_{*}^2+2TK\int_{0}^{\tau_{n}}|Z(s)|^{2}ds+2\int_{0}^{T}|f(s,0,0)|^{2}ds.
\end{eqnarray*}

Return to \eqref{N1} we have 
\begin{eqnarray*}\label{eq2bis}
\int_{0}^{\tau_{n}}|Z(s)|^{2}ds
&\leq & 2(KT^2+1)Y_{*}^2+2TK\int_{0}^{\tau_{n}}|Z(s)|^{2}ds+2\int_{0}^{T}|f(s,0, 0)|^{2}ds\nonumber\\
&&+\left|\int_{0}^{\tau_{n}}\langle Y(s),Z(s)dW(s)\rangle \right|
\end{eqnarray*}
Since we suppose $2KT<1$, the above inequality becomes
\begin{eqnarray}\label{eq2bis}
\int_{0}^{\tau_{n}}|Z(s)|^{2}ds
&\leq &\frac{2(KT^2+1)}{1-2TK}Y_{*}^2+c\int_{0}^{T}|f(s,0, 0)|^{2}ds+\frac{1}{1-2TK}\left|\int_{0}^{\tau_{n}}\langle Y(s),Z(s)dW(s)\rangle \right|\nonumber\\
\end{eqnarray}

The rest of the proof will be subdivided into two parts of the fact that $p$ is greater or less than $2$.

{\bf First case}: $p\geq 2$\\
Since $p>2$ we have $p/2>1$. Next,  it follows from \eqref{eq2bis} that
\begin{eqnarray}\label{eq2}
\left(\int_{0}^{\tau_{n}}|Z(s)|^{2}ds\right)^{p/2}
&\leq & \left[\frac{2(KT^2+1)}{1-2TK}Y_{*}^2+c\int_{0}^{T}|f(s,0, 0)|^{2}ds+\frac{1}{1-2TK}\left|\int_{0}^{\tau_{n}}\langle Y(s),Z(s)dW(s)\rangle \right|\right]^{p/2}\nonumber\\
&\leq & 2^{p-1}\left(\frac{KT^2+1}{1-2KT}\right)^{p/2}Y_{*}^{p}+c_{p}\left(\int_{0}^{T}|f(s,0, 0)|^{2}ds\right)^{p/2} \nonumber\\
&&+\gamma_p\left|\int_{0}^{\tau_{n}}\langle Y(s),Z(s)dW(s)\rangle \right|^{p/2},
\end{eqnarray}
$\displaystyle \gamma_p=2^{p-2}\left(\frac{1}{1-2TK}\right)^{p/2}$,
where we use the fact that for $a, b, c\in\R$,
\begin{eqnarray}\label{Mink}
|a+b+c|^{p/2}&\leq & 2^{p/2-1}(|a|^{p/2}+|b+c|^{p/2})\nonumber\\
&\leq & 2^{p/2-1}|a|^{p/2}+2^{p-2}|b|+2^{p-2}|c|^{p/2}).
\end{eqnarray}
In virtue of BDG inequality derived in \cite{BDG1} (see, Theorem 3.9.1), there exist a constant $\lambda_p$ defined by
\begin{eqnarray}
\lambda_p=\left(\frac{p}{p-1}\right)^{p^2/2}\left(\frac{p(p-1)}{2}\right)^{p/2}.\label{BDG2}
\end{eqnarray} 
such that
\begin{eqnarray*}
\gamma_p\E\left(\left|\int_{0}^{\tau_{n}}\langle Y(s),Z(s)dW(s)\rangle \right|^{p/2}\right)&\leq & \gamma_p\lambda_p \E\left[\left(\int_{0}^{\tau_{n}}|Y(s)|^2|Z(s)|^2ds \right)^{p/4}\right]\\
&&\leq  \gamma_p\lambda_p \E\left[Y_{*}^{p/2}\left(\int_{0}^{\tau_{n}}|Z(s)|^2ds \right)^{p/4}\right],
\end{eqnarray*}
and thus according to Young inequality, we get 
\begin{eqnarray*}
\gamma_p\E\left(\left|\int_{0}^{\tau_{n}}\langle Y(s),Z(s)dW(s)\rangle \right|^{p/2}\right)&\leq & (\gamma_p\lambda_p)^2\E(Y_{*}^{p})+\frac{1}{2}\E\left[\left(\int_{0}^{\tau_{n}}|Z(s)|^2ds\right)^{p/2}\right].
\end{eqnarray*}
 
Finally according to Fatou's lemma we obtain
\begin{eqnarray}\label{eq2}
\E\left[\left(\int_{0}^{T}|Z(s)|^{2}ds\right)^{p/2}\right]
&\leq & d_pY_{*}^{p}+c_{p}\left(\int_{0}^{T}|f(s,0, 0)|^{2}ds\right)^{p/2},
\end{eqnarray}
where
\begin{eqnarray*}
d_p=2^{p}\left(\frac{KT^2+1}{1-2KT}\right)^{p/2}+2^{2p+1}\left(\frac{p}{p-1}\right)^{p^2}\left(\frac{p(p-1)}{2}\right)^{p}\left(\frac{1}{1-2TK}\right)^{p}
\end{eqnarray*}

{\bf Second case}: $1<p<2$\\
When $p\in(1,2),\; p/2\in (\frac{1}{2}, 1)$ so that we can not use directly the previous method. However, we know that 
\begin{eqnarray}\label{eq21bis}
\left(\int_{0}^{\tau_{n}}|Z(s)|^{2}ds\right)^{1/2}
&\leq &\left[\frac{2(KT^2+1)}{1-2TK}Y_{*}^2+c\int_{0}^{T}|f(s,0, 0)|^{2}ds+\frac{1}{1-2TK}\left|\int_{0}^{\tau_{n}}\langle Y(s),Z(s)dW(s)\rangle \right|\right]^{1/2}\nonumber\\
&\leq &\left(\frac{2(KT^2+1)}{1-2TK}\right)^{1/2}Y_{*}+c\left(\int_{0}^{T}|f(s,0, 0)|^{2}ds\right)^{1/2}\nonumber\\
&&+\left(\frac{1}{1-2TK}\right)^{1/2}\left|\int_{0}^{\tau_{n}}\langle Y(s),Z(s)dW(s)\rangle \right|^{1/2}
\end{eqnarray}

Now raising both sides to the power $p\in(1,2)$ and use again inequality \eqref{Mink}, we get
\begin{eqnarray*}
\left(\int_{0}^{\tau_{n}}|Z(s)|^{2}ds\right)^{p/2}
&\leq &2^{3p/2-1}\left(\frac{KT^2+1}{1-2KT}\right)^{p/2}Y_{*}^{p}+c_{p}\left(\int_{0}^{T}|f(s,0, 0)|^{2}ds\right)^{p/2} \nonumber\\
&&+\bar{\gamma}_p\left|\int_{0}^{\tau_{n}}\langle Y(s),Z(s)dW(s)\rangle \right|^{p/2},
\end{eqnarray*}
where $\displaystyle \bar{\gamma}_p=2^{2p-2}\left(\frac{1}{1-2TK}\right)^{p/2}$.

Recall BDG inequality for $1<p<2$, derived  by Ren in \cite{RF}, there exist a constant $\bar{\lambda}_p$ defined by
\begin{eqnarray}
\bar{\lambda}_p=\left(\frac{4}{p}\right)^{p/4}\frac{4}{4-p},\label{BDG1}
\end{eqnarray}
such that 
\begin{eqnarray*}
\bar{\gamma}_p\E\left(\left|\int_{0}^{\tau_{n}}\langle Y(s),Z(s)dW(s)\rangle \right|^{p/2}\right)&\leq & \bar{\gamma}_p\bar{\lambda}_p \E\left[\left(\int_{0}^{\tau_{n}}|Y(s)|^2|Z(s)|^2ds \right)^{p/4}\right]\\
&\leq & \bar{\gamma}_p\bar{\lambda}_p \E\left[Y_{*}^{p/2}\left(\int_{0}^{\tau_{n}}|Z(s)|^2ds \right)^{p/4}\right]\nonumber\\
&\leq &
\frac{1}{2}(\bar{\gamma}_p\bar{\lambda}_p)^2\E(Y_{*}^{p})+\frac{1}{2}\E\left[\left(\int_{0}^{\tau_{n}}|Z(s)|^2ds\right)^{p/2}\right].
\end{eqnarray*}
Finally, using again Fatou's lemma, we obtain
\begin{eqnarray}\label{eq21}
\E\left[\left(\int_{0}^{T}|Z(s)|^{2}ds\right)^{p/2}\right]
&\leq & d_pY_{*}^{p}+c_{p}\left(\int_{0}^{T}|f(s,0, 0)|^{2}ds\right)^{p/2},
\end{eqnarray}
where
\begin{eqnarray*}
\bar{d}_p=2^{3p/2}\left(\frac{KT^2+1}{1-2KT}\right)^{p/2}+2^{2p+1}\left(\frac{4}{p}\right)^{p/4}\left(\frac{1}{1-2TK}\right)^{p}\frac{1}{4-p}
\end{eqnarray*}
\end{proof}
The second estimate allows us to estimate the solution $(Y,Z)$ in terms of the data. In both cases, this estimation is an essential tool to derive existence and uniqueness result. Since we need the explicit conditions of existence and uniqueness, it seems sensible to no longer use Ito's formula using norms with weights. This compels us in the calculations, to exhibit all the details which from our point of view presents a real difficulty. On the other hand, as in the previous proposition, we have to consider two cases according to the value of $p$.
\begin{proposition}\label{prop1}
Assume $\textbf{(H1)}$-$\textbf{(H2)}$ hold and $K$ and $T$ small enough.  Let $(Y,Z)$ be a solution to the delayed BSDE \eqref{A}. For any $p>1$, if $Y \in \mathcal{S}^{p}$ then, $Z\in \mathcal{H}^{p}$ and there exist
a positive constant $C_{p}$ depending on $p, K, T$ such that
\begin{eqnarray*}
\mathbb{E}\left[\sup_{0\leq t\leq T}|Y(t)|^{ p}+\left(\int_{0}^{T}|Z(t)|^{2}dt\right)^{\frac{p}{2}}\right]\leq C_{p}\mathbb{E}\left[|\xi|^{p}+\left(\int_{0}^{T}|f(t,0,0)|^{2}dt\right)^{\frac{p}{2}}\right].
\end{eqnarray*}
\end{proposition}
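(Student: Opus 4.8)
The plan is to complement Proposition \ref{lm1}, which already controls $Z$ in terms of $Y_*=\sup_{0\le t\le T}|Y(t)|$ and the data, with a self-contained bound on $\E(Y_*^p)$; once both are in hand the stated estimate follows by addition. Since $Y\in\mathcal S^p$, Proposition \ref{lm1} first guarantees $Z\in\mathcal H^p$, so $\int_0^\cdot Z\,dW$ is a genuine $L^p$-martingale (for $1<p<2$ this follows from the BDG bound $\E|\int_0^T Z\,dW|^p\le c\,\E(\int_0^T|Z|^2ds)^{p/2}<\infty$ and uniform integrability). Taking $\mathcal F_t$-conditional expectation in \eqref{A} then gives $Y(t)=\E\big[\xi+\int_t^T f(s,Y_s,Z_s)\,ds\,\big|\,\mathcal F_t\big]$, so that $|Y(t)|\le \E[N\,|\,\mathcal F_t]$ with $N:=|\xi|+\int_0^T|f(s,Y_s,Z_s)|\,ds$. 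Doob's $L^p$-maximal inequality yields $\E(Y_*^p)\le (p/(p-1))^p\,\E(N^p)$, and it remains to estimate $\E(N^p)$.

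The crux is bounding $\E(N^p)$ by the data plus a small multiple of $\E(Y_*^p)$. Using the Lipschitz assumption \textbf{(H2)}$(i)$,
\begin{eqnarray*}
\int_0^T|f(s,Y_s,Z_s)|\,ds\le \int_0^T|f(s,0,0)|\,ds+\sqrt K\int_0^T\Big(\int_{-T}^0(|Y(s+u)|^2+|Z(s+u)|^2)\,\alpha(du)\Big)^{1/2}ds.
\end{eqnarray*}
I would apply Cauchy--Schwarz in the time variable, then Fubini and the same change of variables as in Proposition \ref{lm1}, together with $Y(s)=Y(0)$, $Z(s)=0$ for $s<0$, to bound the last term by $\sqrt{KT}\,\big(T\,Y_*^2+\int_0^T|Z(s)|^2ds\big)^{1/2}$. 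Thus
\begin{eqnarray*}
N\le |\xi|+\sqrt T\Big(\int_0^T|f(s,0,0)|^2ds\Big)^{1/2}+\sqrt{KT}\Big(T\,Y_*^2+\int_0^T|Z(s)|^2ds\Big)^{1/2},
\end{eqnarray*}
and raising to the power $p$ and taking expectations (via $(a+b+c)^p\le 3^{p-1}(a^p+b^p+c^p)$) produces $\E(N^p)\le \hat C_p\,\E[\text{data}]+\hat C_p\,K^{p/2}T^{p/2}\big(T^{p/2}\E(Y_*^p)+\E(\int_0^T|Z|^2ds)^{p/2}\big)$, where $\E[\text{data}]=\E[|\xi|^p+(\int_0^T|f(s,0,0)|^2ds)^{p/2}]$.

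Next I would insert the bound of Proposition \ref{lm1} for $\E(\int_0^T|Z|^2ds)^{p/2}$, which introduces the constant $d_p$ and more of the data, to obtain
\begin{eqnarray*}
\E(Y_*^p)\le \theta\,\E(Y_*^p)+C_p'\,\E\Big[|\xi|^p+\Big(\int_0^T|f(s,0,0)|^2ds\Big)^{p/2}\Big],
\end{eqnarray*}
with $\theta$ proportional to $K^{p/2}T^{p/2}(T^{p/2}+d_p)$ times the Doob and algebraic constants. The smallness hypothesis on $K$ and $T$ is precisely what makes $\theta<1$, so this term is absorbed into the left side, giving $\E(Y_*^p)\le C_p\,\E[\text{data}]$. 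Feeding this back into Proposition \ref{lm1} controls $\E(\int_0^T|Z|^2ds)^{p/2}$ by the data as well, and adding the two bounds finishes the proof.

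I expect the main obstacle to be the absorption step: one must track the Doob constant, the algebraic constant from $(a+b+c)^p$, and $d_p$ from Proposition \ref{lm1}, and check that the resulting $\theta$ is genuinely $<1$ under the smallness assumption — this is exactly the explicit condition on $K$ and $T$ advertised in the introduction. A secondary point is justifying the conditional-expectation representation and the finiteness $\E(N^p)<\infty$ before Doob is applied; if one prefers not to invoke $Z\in\mathcal H^p$ directly, a localization by the stopping times $\tau_n$ of Proposition \ref{lm1} followed by Fatou's lemma handles this cleanly.
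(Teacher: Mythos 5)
Your proposal is correct and follows essentially the same route as the paper's proof: conditional expectation plus Doob's maximal inequality, the Lipschitz/Fubini/change-of-variables bound on the generator term, insertion of Proposition \ref{lm1} to eliminate $\left(\int_0^T|Z|^2ds\right)^{p/2}$, and absorption of the resulting small multiple of $\E(Y_*^p)$ under the smallness condition on $K$ and $T$. The only differences are cosmetic (you apply Cauchy--Schwarz before squaring rather than after) or refinements (you justify the conditional-expectation representation via the martingale property of $\int Z\,dW$, a point the paper passes over silently).
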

\begin{proof}
In view of \eqref{A}, it is easy to check that
\begin{equation*}
Y(t)\leq |\xi| + \int_{t}^{T}|f(s,Y_{s},Z_{s})|ds-\int_{t}^{T}Z(s)dW(s),\;\; 0\leq t\leq T.
\end{equation*}
The conditional expectation with respect to $\mathcal{F}_{t}$ taken in the previous inequality provides
\begin{equation*}
Y(t) \leq \mathbb{E}\left( |\xi| + \int_{0}^{T}|f(s,Y_{s},Z_{s})|ds \Big|\mathcal{F}_{t} \right), \;\; 0\leq t\leq T.
\end{equation*}
Moreover, applying respectively Doob's and Jensen's inequalities, we obtain
\begin{eqnarray}\label{IN3}
\mathbb{E}\left(\sup_{0\leq t\leq T}|Y(t)|^{p}\right) &\leq & \mathbb{E}\left(\sup_{0\leq t\leq T }\left|\mathbb{E}\left(|\xi|+\int_{0}^{T}|f(s,Y_{s},Z_{s})|ds \Big|\mathcal{F}_{t} \right)\right|^{p}\right)\nonumber\\
&\leq & \left(\frac{p}{p-1}\right)^{p}\sup_{0\leq t\leq T}\mathbb{E}\left[\left|\mathbb{E}\left(|\xi|+ \int_{0}^{T}|f(s,Y_{s},Z_{s})|ds\Big|\mathcal{F}_{t}\right)\right|^{p}\right]\nonumber\\
&\leq &2^{p-1}\left(\frac{p}{p-1}\right)^{p}\mathbb{E}\left[|\xi|^p + T^{p/2}\left(\int_{0}^{T}|f(s,Y_{s},Z_{s})|^2ds\right)^{p/2}\right].
\end{eqnarray}
Let now estimate the second term of the right side of \eqref{IN3}. For this purpose, we have
\begin{eqnarray}\label{eq4}
\int_{0}^{T}|f(s,Y_{s},Z_{s})|^2ds
&\leq & 2\int_{0}^{T}|f(s,Y_{s},Z_{s})-f(s,0,0)|^2ds +2\int_{0}^{T}|f(s,0,0)|^2ds\nonumber\\
&\leq & 2K\int_{0}^{T}\int_{-T}^{0}|Y(s+u)|^2\alpha(du)ds+2K\int_{0}^{T}\int_{-T}^{0}|Z(s+u)|^2\alpha(du)ds\nonumber\\
&&+2\int_{0}^{T}|f(s,0,0)|^2ds\nonumber\\
&\leq & 2KTY_{*}^2+2K\int_0^T|Z(s)|^2ds+2\int_{0}^{T}|f(s,0,0)|^2ds,
\end{eqnarray}
where we use Lipschitz assumption, Fubini inequality, change of variable and the fact $Z(t)=0$ for $t < 0$.
The rest of this proof is subdivided in two step.

{\bf First step}: $p\geq 2$\\
We have
\begin{eqnarray}\label{eq
4}
T^{p/2}\left(\int_{0}^{T}|f(s,Y_{s},Z_{s})|^2ds\right)^{p/2}
&\leq &  2^{p-1}K^{p/2}T^pY_{*}^{p}+2^{3p/2-2}K^{p/2}T^{p/2}\left(\int_0^T|Z(s)|^2ds\right)^{p/2}\nonumber\\
&&+c_p\left(\int_{0}^{T}|f(s,0,0)|^2ds\right)^{p/2}
\end{eqnarray}
which together with Proposition 3.1 and inequality \eqref{IN3} provide
\begin{eqnarray*}\label{IN4}
\mathbb{E}\left(\sup_{0\leq t\leq T}|Y(t)|^{p}\right)
&\leq & 2^{2p-2}\left(\frac{p}{p-1}\right)^p K^{p/2}\left(T^{p/2}+2^{p/2-1}K^{p/2}d_p\right)\mathbb{E}\left(\sup_{0\leq t\leq T}|Y(t)|^{p}\right)\nonumber\\
&&+c_p\E\left[|\xi|^{p}+\left(\int_{0}^{T}|f(s,0,0)|^{2}ds\right)^{p/2}\right].
\end{eqnarray*}
Finally, choosing $T$ and $K$ such that $\beta_p<1$, where $\displaystyle \beta_p=2^{2p-2}\left(\frac{p}{p-1}\right)^p K^{p/2}\left(T^{p/2}+2^{p/2-1}K^{p/2}d_p\right)$, we get 
\begin{eqnarray}
\mathbb{E}\left(\sup_{0\leq s \leq T}|Y(s)|^{p}\right)&\leq & C_p\mathbb{E}\left[|\xi|^{p}+\left(\int_{0}^{T}|f(s,0,0)|^{2}ds\right)^{p/2}\right].
\end{eqnarray}

{\bf Second step}: $1<p<2$\\
We have
\begin{eqnarray}\label{eq4}
T^{p/2}\left(\int_{0}^{T}|f(s,Y_{s},Z_{s})|^2ds\right)^{p/2}
&\leq &  2^{3p/2-1}K^{p/2}T^pY_{*}^{p}+2^{5p/2-2}K^{p/2}T^{p/2}\left(\int_0^T|Z(s)|^2ds\right)^{p/2}\nonumber\\
&&+c_p\left(\int_{0}^{T}|f(s,0,0)|^2ds\right)^{p/2}
\end{eqnarray}
which together with Proposition 3.1 and inequality \eqref{IN3} provide
\begin{eqnarray*}\label{IN4}
\mathbb{E}\left(\sup_{0\leq t\leq T}|Y(t)|^{p}\right)
&\leq & 2^{5p/2-2}\left(\frac{p}{p-1}\right)^p K^{p/2}\left(T^{p/2}+2^{p-1}K^{p/2}\bar{d}_p\right)\mathbb{E}\left(\sup_{0\leq t\leq T}|Y(t)|^{p}\right)\nonumber\\
&&+c_p\E\left[|\xi|^{p}+\left(\int_{0}^{T}|f(s,0,0)|^{2}ds\right)^{p/2}\right].
\end{eqnarray*}
Finally, choosing $T$ and $K$ such that $\bar{\beta}_p<1$, where $\displaystyle \bar{\beta}_p=2^{5p/2-2}\left(\frac{p}{p-1}\right)^p K^{p/2}\left(T^{p/2}+2^{p-1}K^{p/2}\bar{d}_p\right)$, we get 
\begin{eqnarray}
\mathbb{E}\left(\sup_{0\leq s \leq T}|Y(s)|^{p}\right)&\leq & C_p\mathbb{E}\left[|\xi|^{p}+\left(\int_{0}^{T}|f(s,0,0)|^{2}ds\right)^{p/2}\right].
\end{eqnarray}
\end{proof}

\begin{remark}
According to Proposition \ref{lm1} and \ref{prop1}, it is reasonable to clarify the terminology "$K$ and $T$ small enough". Indeed, in view of their proofs, the terminal time $T$ and the Lipschitz constant $K$ must verify this following conditions according to the values of $p$.
\begin{eqnarray}
2KT<1,\label{Cond2}
\end{eqnarray}

\begin{eqnarray}\label{Cond3}
2^{2p-2}\left(\frac{p}{p-1}\right)^p K^{p/2}\left(T^{p/2}+2^{p/2-1}K^{p/2}d_p\right)<1,\;\;\;\mbox{for}\;\; p\geq 2,
\end{eqnarray}
and 
\begin{eqnarray}\label{Cond3bis}
2^{5p/2-2}\left(\frac{p}{p-1}\right)^p K^{p/2}\left(T^{p/2}+2^{p-1}K^{p/2}\bar{d}_p\right)<1,\;\;\;\mbox{for}\;\; 0< p< 2. 	
\end{eqnarray}
\end{remark}
\section{Existence and uniqueness}
This section is devoted to establish the existence and uniqueness result for BSDE \eqref{A} in $L^p$-sense, for $p>1$.

When $p>2$, we revisit the result of Reis et al. established in \cite{Reis} by clarifying the condition of existence and uniqueness. Indeed, since the data (terminal value $\xi$ and the process $f(s,0,0)$) are $p$-integrable for $p>2 $, one know that they are also square integrable. Therefore it suffices to prove by Picard's iteration principle existence and uniqueness result in $L^2$ with a similar method used by Delong and Imkeller in \cite{Imk1}. Then show with Proposition 3.1 and 3.2 that this solution lives in $L^p$.

Whereas for $p\in (1,2)$, data (terminal value $\xi$ and the process $f(s,0,0)$) are in $L^p, \; 1<p<2$ so that they are not square integrable. Therefore we can not use $L^{2}$-existence and uniqueness result due by Delong and Imkeller in \cite{Imk1}. To get around this difficulty, we will use two methods. First, we use the technic inspired by Briand et al. in \cite{Bral} that is the $L^{\infty}$-approximation which consist to build a sequence and study it convergence in $L^p$. Next, the second method is the well know Picard iterative procedure combining with $L^p$-solution derive by Briand et al. (see \cite{Bral}) for non delayed BSDE.  

Let first recall the $L^2$-existence and uniqueness with a slight modification on the condition due to our new estimation method and choice of the space of solution.

\begin{theorem}\label{th1}
Assume \textbf{(H1)}-\textbf{(H2)} and  $T$ and $K$ satisfy
\begin{eqnarray}\label{th10}
28TK\max\{1,T\}<1.
\end{eqnarray}
Then BSDE \eqref{A} has as a unique solution in
$\mathcal{S}^{2}\times \mathcal{H}^{2}$.
\end{theorem}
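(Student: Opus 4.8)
The plan is to prove existence and uniqueness for the $L^2$-theory by the standard Picard iteration scheme adapted to the delayed setting, following the approach of Delong and Imkeller but using the a priori estimates established in Propositions \ref{lm1} and \ref{prop1} (specialized to $p=2$) to obtain the contraction. First I would set up the iteration: starting from $(Y^0,Z^0)=(0,0)$, define inductively $(Y^{n+1},Z^{n+1})$ as the solution of the \emph{non-delayed} BSDE
\begin{eqnarray*}
Y^{n+1}(t)=\xi+\int_t^T f(s,Y^n_s,Z^n_s)\,{\rm d}s-\int_t^T Z^{n+1}(s)\,{\rm d}W(s),
\end{eqnarray*}
where at each step the generator $g^n(s):=f(s,Y^n_s,Z^n_s)$ is a \emph{given} process not depending on the unknown, so that $(Y^{n+1},Z^{n+1})$ is obtained by the martingale representation theorem applied to $\E\bigl(\xi+\int_0^T g^n(s)\,{\rm d}s\mid\mathcal{F}_t\bigr)$. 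The key point is that under \textbf{(H2)}(ii) and the integrability of $\xi$, one checks that $g^n\in\mathcal{H}^2$ whenever $(Y^n,Z^n)\in\mathcal{S}^2\times\mathcal{H}^2$, so each iterate is well defined and lives in $\mathcal{S}^2\times\mathcal{H}^2$.

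Next I would establish that the map $(Y^n,Z^n)\mapsto(Y^{n+1},Z^{n+1})$ is a contraction on $\mathcal{S}^2\times\mathcal{H}^2$. Writing $\delta Y^{n}=Y^{n+1}-Y^{n}$, $\delta Z^{n}=Z^{n+1}-Z^{n}$, and $\delta f^{n}=f(s,Y^n_s,Z^n_s)-f(s,Y^{n-1}_s,Z^{n-1}_s)$, the difference $(\delta Y^n,\delta Z^n)$ solves a linear BSDE with zero terminal condition and generator $\delta f^{n}$. Applying It\^o's formula to $|\delta Y^n|^2$ exactly as in the proof of Proposition \ref{prop1} (with $p=2$), together with the control of $\delta Z^n$ from Proposition \ref{lm1}, I would derive an estimate of the form
\begin{eqnarray*}
\|\delta Y^n\|_{\mathcal{S}^2}^2+\|\delta Z^n\|_{\mathcal{H}^2}^2\leq \Theta(K,T)\left(\|\delta Y^{n-1}\|_{\mathcal{S}^2}^2+\|\delta Z^{n-1}\|_{\mathcal{H}^2}^2\right),
\end{eqnarray*}
where the Lipschitz bound in \textbf{(H2)}(i), the Fubini/change-of-variables argument exploiting $Y(s)=Y(0)$, $Z(s)=0$ for $s<0$, and the probability normalization of $\alpha$ produce a contraction coefficient $\Theta(K,T)$ expressed in terms of $K$ and $T$. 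The condition \eqref{th10}, namely $28TK\max\{1,T\}<1$, is precisely what guarantees $\Theta(K,T)<1$; I would carry out the bookkeeping of the numerical constants (Doob's inequality giving the factor $(p/(p-1))^p=4$ at $p=2$, the BDG and Young steps, and the $2TK$ coming from the delay term) to verify that the accumulated constant is at most $28TK\max\{1,T\}$.

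Once the contraction is in hand, the sequence $(Y^n,Z^n)$ is Cauchy in the Banach space $\mathcal{S}^2\times\mathcal{H}^2$ and converges to a limit $(Y,Z)$; passing to the limit in the defining equation — using the Lipschitz continuity of $f$ to pass the limit through the generator and the $\mathcal{H}^2$-convergence to pass it through the stochastic integral — shows that $(Y,Z)$ solves \eqref{A}. Uniqueness follows from the same difference estimate applied to two solutions: if $(Y,Z)$ and $(Y',Z')$ both solve \eqref{A}, their difference satisfies the same linear BSDE and the bound above with $\Theta(K,T)<1$ forces the difference to vanish. I expect the main obstacle to be the contraction step: the delayed structure means the generator at step $n+1$ depends on the \emph{whole past segment} $(Y^n_s,Z^n_s)$, so the Fubini and change-of-variables manipulations that convert $\int_0^T\int_{-T}^0|\cdot(s+u)|^2\alpha(du)\,{\rm d}s$ back into $\int_0^T|\cdot(s)|^2\,{\rm d}s$ must be handled carefully, and one must track the interplay between the $\delta Y$ estimate and the $\delta Z$ estimate (the latter through Proposition \ref{lm1}) so that the combined constant stays below the explicit threshold in \eqref{th10} rather than merely being "small enough."
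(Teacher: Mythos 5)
Your proposal is correct and takes essentially the approach the paper intends: the paper states this theorem without a written proof, recalling the $L^2$ result of Delong and Imkeller obtained by Picard iteration (with the condition \eqref{th10} modified by the paper's unweighted estimation technique), and your contraction argument on $\mathcal{S}^{2}\times\mathcal{H}^{2}$ via martingale representation, Doob, It\^o, Fubini/change of variables and Young is exactly that method. The one step you defer --- the bookkeeping showing the accumulated coefficient is at most $28TK\max\{1,T\}$ --- is routine and safe: a direct count along your route (Doob's factor $4$ at $p=2$, plus the It\^o-isometry control of the $Z$-increment) yields a contraction coefficient of the form $cTK\max\{1,T\}$ with $c\leq 28$, so condition \eqref{th10} indeed forces the contraction.
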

Next we give $L^p$-solution for $p>2$.
\begin{theorem}
Suppose that $p>2$ and assume that ({\bf H1})-({\bf H2}) hold. Let $K$ or $T$ be small enough such that \eqref{Cond2} and \eqref{Cond3} hold. Then BSDE \eqref{A} admits a unique solution $(Y,Z)\in\mathcal{S}^{p}\times \mathcal{H}^{p}$.
\end{theorem}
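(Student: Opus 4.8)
The plan is to promote the $L^{2}$-solution furnished by Theorem \ref{th1} to an $L^{p}$-solution, exactly along the lines sketched before the statement. First I would record the elementary reduction: since the underlying probability measure is finite and $p>2$, Jensen's inequality gives the inclusions $L^{p}\subset L^{2}$ for both the terminal value and the generator at the origin, so \textbf{(H1)}-\textbf{(H2)} hold a fortiori with exponent $2$. Requiring in addition that the same smallness of $K$ or $T$ be strong enough for \eqref{th10}, Theorem \ref{th1} applies and produces a pair $(Y,Z)\in\mathcal{S}^{2}\times\mathcal{H}^{2}$ solving \eqref{A}. Uniqueness in $\mathcal{S}^{p}\times\mathcal{H}^{p}$ then costs nothing new: any solution in $\mathcal{S}^{p}\times\mathcal{H}^{p}$ with $p>2$ lies, again by Jensen, in $\mathcal{S}^{2}\times\mathcal{H}^{2}$, where Theorem \ref{th1} admits at most one solution.

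All the remaining work is to show that this single $L^{2}$-solution already belongs to $\mathcal{S}^{p}\times\mathcal{H}^{p}$. The key observation is that it suffices to prove $Y\in\mathcal{S}^{p}$: once this is known, Proposition \ref{lm1} immediately yields $Z\in\mathcal{H}^{p}$ and Proposition \ref{prop1} delivers the announced estimate. To obtain $Y\in\mathcal{S}^{p}$ I would reproduce the mechanism of the proof of Proposition \ref{prop1}: taking conditional expectations in \eqref{A} gives the pointwise bound $|Y(t)|\leq \mathbb{E}\big(|\xi|+\int_{0}^{T}|f(s,Y_{s},Z_{s})|\,ds\,\big|\,\mathcal{F}_{t}\big)$, to which Doob's and Jensen's inequalities apply; bounding the generator through the Lipschitz condition \textbf{(H2)} and Proposition \ref{lm1} produces a self-referential inequality for $\mathbb{E}(\sup_{t}|Y(t)|^{p})$, and the smallness condition \eqref{Cond3} (that is, $\beta_{p}<1$) lets me absorb this term into the left-hand side, leaving only the data on the right.

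The genuine obstacle is that this absorption argument is a priori circular: the right-hand side of the intermediate inequality already contains $\mathbb{E}(\sup_{t}|Y(t)|^{p})$ and $\mathbb{E}[(\int_{0}^{T}|Z(s)|^{2}ds)^{p/2}]$, which are not yet known to be finite, so one cannot legitimately cancel them. I would break this circularity by the same localization used in Proposition \ref{lm1}: stopping at $\tau_{n}=\inf\{t:|Y(t)|+\int_{0}^{t}|Z(s)|^{2}ds\geq n\}\wedge T$, the stopped process is bounded and every quantity appearing is finite, so the absorption is justified on $[0,\tau_{n}]$ and yields a bound uniform in $n$; Fatou's lemma then passes to the limit as $n\to\infty$. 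The only point demanding care is the time-delayed structure, since $f(s,Y_{s},Z_{s})$ reaches back over $[s-T,s]$ and may sample values outside the stopped interval: this is handled, as in the earlier propositions, by the standing convention $Y(s)=Y(0)$, $Z(s)=0$ for $s<0$ together with Fubini's theorem and the change of variables that turned the delayed integrals into ordinary ones. With $Y\in\mathcal{S}^{p}$ in hand, Propositions \ref{lm1} and \ref{prop1} close the existence part, which together with the uniqueness noted above completes the proof.
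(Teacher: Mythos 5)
Your overall strategy (pass from the $L^2$-solution of Theorem \ref{th1} to an $L^p$-solution) is the one the paper announces in the introduction of Section 4, and two of your steps are sound: uniqueness via the embedding $\mathcal{S}^p\times\mathcal{H}^p\subset\mathcal{S}^2\times\mathcal{H}^2$ (valid since $p>2$), and the reduction of existence to proving $Y\in\mathcal{S}^p$, after which Propositions \ref{lm1} and \ref{prop1} finish. The genuine gap is in the step you yourself flag as the obstacle: your localization does not break the circularity. Once you stop the equation at $\tau_n$, the role of the terminal condition is played by $Y(\tau_n)$, not by $\xi$, so Doob's and Jensen's inequalities give an estimate of the form
\begin{eqnarray*}
\E\Big(\sup_{0\leq t\leq \tau_n}|Y(t)|^p\Big)\leq 2^{p-1}\Big(\frac{p}{p-1}\Big)^p\,\E|Y(\tau_n)|^p+\beta_p\,\E\Big(\sup_{0\leq t\leq \tau_n}|Y(t)|^p\Big)+C_p\,\E[\mathrm{data}].
\end{eqnarray*}
All terms are finite, but the boundary term $\E|Y(\tau_n)|^p$ is not controlled by the data uniformly in $n$: the only available bounds are $\E|Y(\tau_n)|^p\leq n^p$, or $\E|Y(\tau_n)|^p\leq\E\sup_{0\leq t\leq\tau_n}|Y(t)|^p$, and since its coefficient $2^{p-1}\left(\frac{p}{p-1}\right)^p$ exceeds $1$, it can never be absorbed into the left-hand side, however small $K$ and $T$ are. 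A Chebyshev bound from the known $L^2$-estimates gives $\P(\tau_n<T)\lesssim n^{-2}$, hence $\E\big[|Y(\tau_n)|^p\mathbf{1}_{\{\tau_n<T\}}\big]\lesssim n^{p-2}$, which diverges precisely because $p>2$. Note that in Proposition \ref{lm1} the analogous boundary term $|Y(\tau_n)|^2\leq Y_*^2$ is harmless only because $Y\in\mathcal{S}^p$ is a standing hypothesis there; here that is exactly the conclusion you are trying to reach, so the same device is unavailable, and Fatou's lemma has no uniform bound to pass to the limit.

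The paper breaks the circularity in a different way: it runs the Picard iteration \eqref{Ind} inside $\mathcal{S}^p\times\mathcal{H}^p$. For each iterate the generator $f(s,Y^n_s,Z^n_s)$ is a \emph{fixed} process built from the previous iterate, so $Y^{n+1}(t)=\E\big(\xi+\int_t^T f(s,Y^n_s,Z^n_s)\,ds\,\big|\,\mathcal{F}_t\big)$ and Doob's inequality yields $\E\sup_t|Y^{n+1}(t)|^p<+\infty$ unconditionally: the right-hand side involves only the previous iterate, known to lie in $L^p$ by induction, never the quantity being estimated. Membership of $Z^{n+1}$ in $\mathcal{H}^p$ follows, the smallness conditions \eqref{Cond2} and \eqref{Cond3} make the scheme converge, and the limit inherits the $L^p$-bounds. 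If you wish to keep your structure (solve in $L^2$ first, upgrade afterwards), you must replace the stopping-time argument by some construction producing approximating solutions that are already known to lie in $\mathcal{S}^p\times\mathcal{H}^p$ with bounds uniform along the approximation; the iteration \eqref{Ind} is precisely such a construction.
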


\begin{proof}
Since the proof is based on the standard Picard iteration, we initialize by $Y^0 = 0$ and $Z^0 = 0$ and define recursively	, for $n\geq 0$,
\begin{eqnarray}
Y^{n+1}(t)=\xi+\int^T_t f(s,Y_s^{n},Z_s^{n})ds-\int^T_t Z^{n+1}(s)dW_s,\;\; 0\leq t\leq T.\label{Ind}
\end{eqnarray}
For $n\geq 1$, let assume $(Y^n,Z^n)\in\mathcal{S}^{p}\times \mathcal{H}^{p}$. We shall prove that \eqref{Ind} has a unique solution $(Y^{n+1},Z^{n+1})\in\mathcal{S}^{p}\times \mathcal{H}^{p}$. In this fact, in view of \eqref{eq4}, we derive
\begin{eqnarray*}
\left(\int^{T}_{0}|f(t,Y^n_s,Z^n_s)|^2dt\right)^{p/2} &\leq & 2^{p} 2^{p-1}K^{p/2}T^{p/2}(Y_{*}^{n})^{p}+c_p\left(\int_{0}^{T}|f(s,0,0)|^{2}ds\right)^{p/2}\nonumber\\
&&+2^{3p/2-2}K^{p/2}\left(\int_{0}^{T}|Z^n(s)|^{2}ds\right)^{p/2}	\\
&<& +\infty,
\end{eqnarray*}
 which prove that the process $f(s,Y^n_s,Z^n_s)$ belongs in $\mathcal{H}^{p},\ p>2$, hence in $\mathcal{H}^{2}$. Next, $\xi$ belongs also in $L^2$ because it is state in $L^p, \ p>2$. Therefore, the martingale representation yields a unique process $Z^{n+1}\in\mathcal{H}^2$ such that for $t\in[0,T]$
\begin{eqnarray*}
\E\left(\xi+\int_0^Tf(s,Y^n_s,Z^n_s)ds|\mathcal{F}_t\right)=\E\left(\xi+\int_0^Tf(s,Y^n_s,Z^n_s)ds\right)+\int_0^t Z^{n+1}dW(s).
\end{eqnarray*}  
Setting $\displaystyle Y^{n+1}_t=\E\left(\xi+\int_t^Tf(s,Y^n_s,Z^n_s)ds| \mathcal{F}_t\right)$, then $(Y^{n+1},Z^{n+1})$ is 	a unique solution of BSDEs \eqref{Ind}. Moreover it follows from Proposition \ref{lm1} and Proposition \ref{prop1} that $(Y^{n+1},Z^{n+1})\in \mathcal{S}^p\times\mathcal{H}^p$ and hence in $\mathcal{S}^2\times\mathcal{H}^2$. 

 A similar approach used by Delong and Imkeller in the proof of their result (see Theorem 2.1 in \cite{Imk1}) implies that $(Y^n,Z^n)$ converges in $\mathcal{S}^2\times\mathcal{H}^2$. Therefore it limit denoted by $(Y,Z)$ satisfies BSDE \eqref{A}. It remains to prove that $(Y,Z)$ belongs to $\mathcal{S}^p\times\mathcal{H}^p$. This is confirmed using once again respectively Proposition \ref{lm1} and Proposition \ref{prop1}.
 \end{proof}

We are now ready to give the main result of this paper which is the existence and uniqueness result for BSDEs \eqref{A} under $p$-integrable data, with $p\in(1,2)$.
\begin{theorem}
Assume \textbf{(H1)}-\textbf{(H2)} and $T$ or $K$ small enough i.e $T$ and $K$. Then BSDE  \eqref{A} has as a unique solution in $\mathcal{S}^{p}\times\mathcal{H}^{p}$, for  $p\in(1,2)$.
\end{theorem}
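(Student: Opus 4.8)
The plan is to obtain the solution as an $L^p$-limit of solutions associated with bounded (hence square-integrable) data, following the $L^{\infty}$-approximation idea of Briand et al. \cite{Bral}, and to make the passage to the limit rigorous by means of the a priori estimate of Proposition \ref{prop1}. Throughout I assume $T$ and $K$ are small enough that \eqref{Cond2}, \eqref{Cond3bis} and \eqref{th10} hold simultaneously.

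First I would regularize the data. Since $\xi\in L^{p}$ and $\{f(t,0,0)\}$ satisfies \textbf{(H2)}$(ii)$, I choose truncations $\xi_{n}$ of $\xi$ that are bounded and converge to $\xi$ in $L^{p}$, together with bounded processes $f_{n}(t,0,0)$ such that $\mathbb{E}[(\int_{0}^{T}|f_{n}(t,0,0)-f(t,0,0)|^{2}dt)^{p/2}]\to 0$. Setting $f_{n}(t,y,z):=f(t,y,z)-f(t,0,0)+f_{n}(t,0,0)$ preserves the delayed Lipschitz constant $K$, so $(\xi_{n},f_{n})$ still satisfies \textbf{(H1)}-\textbf{(H2)} and has bounded data. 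For each $n$ this data is square-integrable, so Theorem \ref{th1} (or, if convenient, the case $p>2$) yields a unique solution $(Y^{n},Z^{n})\in\mathcal{S}^{2}\times\mathcal{H}^{2}\subset\mathcal{S}^{p}\times\mathcal{H}^{p}$.

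Next I would show that $(Y^{n},Z^{n})$ is Cauchy in $\mathcal{S}^{p}\times\mathcal{H}^{p}$. The key observation is that the difference $(Y^{n}-Y^{m},Z^{n}-Z^{m})$ solves a delayed BSDE with terminal value $\xi_{n}-\xi_{m}$ whose generator $F(s,y,z):=f(s,\,y+Y^{m}_{s},\,z+Z^{m}_{s})-f(s,Y^{m}_{s},Z^{m}_{s})+\bigl(f_{n}(s,0,0)-f_{m}(s,0,0)\bigr)$ is again $K$-Lipschitz in the delayed sense and has free term $F(s,0,0)=f_{n}(s,0,0)-f_{m}(s,0,0)$. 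Hence Proposition \ref{prop1} applies verbatim to this difference and gives
\begin{eqnarray*}
\|Y^{n}-Y^{m}\|_{\mathcal{S}^{p}}^{p}+\|Z^{n}-Z^{m}\|_{\mathcal{H}^{p}}^{p}\leq C_{p}\,\mathbb{E}\left[|\xi_{n}-\xi_{m}|^{p}+\left(\int_{0}^{T}|f_{n}(s,0,0)-f_{m}(s,0,0)|^{2}ds\right)^{p/2}\right].
\end{eqnarray*}
By construction the right-hand side tends to $0$ as $n,m\to\infty$. Since $\mathcal{S}^{p}\times\mathcal{H}^{p}$ is a Banach space, $(Y^{n},Z^{n})$ converges to some $(Y,Z)\in\mathcal{S}^{p}\times\mathcal{H}^{p}$; using the delayed Lipschitz property together with the bound on $\int_{0}^{T}|f(s,Y_{s},Z_{s})|^{2}ds$ established in the proof of Proposition \ref{prop1}, one passes to the limit in each term of the defining identity to check that $(Y,Z)$ solves \eqref{A}. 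Uniqueness follows from the same application of Proposition \ref{prop1} to the difference of two $L^{p}$-solutions, whose free term then vanishes, forcing the difference to be zero precisely because \eqref{Cond3bis} makes the relevant constant strictly contractive. As an alternative I would run the Picard scheme \eqref{Ind} directly in $\mathcal{S}^{p}\times\mathcal{H}^{p}$, invoking the non-delayed $L^{p}$-theory of \cite{Bral} to solve each step and again using Proposition \ref{prop1} on consecutive differences to obtain a contraction with ratio governed by $\bar{\beta}_{p}<1$.

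The main obstacle I anticipate is the contraction/passage-to-the-limit step rather than the approximation itself: one must verify that Proposition \ref{prop1} genuinely applies to the difference equation, which requires that the delayed Lipschitz structure survives the shift by $(Y^{m},Z^{m})$ and that the constant $C_{p}$ there, built from $\bar{d}_{p}$ and $\bar{\beta}_{p}$, stays uniform in $n,m$. In particular the control of the $Z$-differences hinges on Proposition \ref{lm1}, and it is exactly the smallness condition \eqref{Cond3bis} that guarantees the map $n\mapsto(Y^{n},Z^{n})$ (or the Picard map) is contractive; checking that the $\alpha$-weighted delay terms reassemble, after Fubini and the change of variables $s\mapsto s+u$, into the clean factor appearing in \eqref{Cond2} is the delicate bookkeeping on which the whole argument rests.
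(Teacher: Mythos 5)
Your proposal is correct and follows essentially the same route as the paper: truncate the data so it becomes bounded (hence square-integrable), solve each truncated equation via the $L^2$ result of Theorem \ref{th1}, and apply Proposition \ref{prop1} to the difference equation --- whose shifted generator stays $K$-Lipschitz in the delayed sense with free term $f_n(s,0,0)-f_m(s,0,0)$ --- to get a Cauchy sequence in $\mathcal{S}^{p}\times\mathcal{H}^{p}$, with uniqueness coming from the same proposition applied to the difference of two solutions. Your suggested alternative (running the Picard scheme directly and invoking the non-delayed $L^p$ theory of \cite{Bral}) is precisely the paper's second method, so nothing in your argument departs from the paper's proof.
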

\begin{proof}
Let us start by studying the uniqueness part. Let us consider $(Y,Z)$ and $(Y',Z')$ two solutions of BSDE \eqref{A} in the appropriate space. Then the process $(U,V)$ defined by $U=Y-Y'$ and $V=Z-Z'$ is a solution of the following BSDE
\begin{eqnarray*}
	U(t)=\int_t^Tg(s,U_s,V_s)ds-\int_t^TV(s)dW(s),
\end{eqnarray*}
where $g$ stands for the random function
\begin{eqnarray*}
g(t,y_t,z_t) = f(t,y_t+Y'_t,z_t+Z'_t)-f(t,Y'_T,Z'_t)
\end{eqnarray*}
It is not difficult to prove that $g$ satisfies assumption \textbf{(H2)}, with $g(t,0,0)=0$. Thanks to Proposition \ref{prop1}, we get immediately that $(U,V)=(0,0)$ and finally $Y=Y'$ and $Z=Z'$.

Let us turn to the existence part. We will use two methods.

{\bf $L^{\infty}$-approximation method}\\

Let $(q_n)_{n\geq 1}$ denote the  sequence of function defined by
\begin{eqnarray*}
q_{n}(x)= x\frac{n}{|x|\vee n}, \; \forall\;\; x\in \R. 
\end{eqnarray*}
Next we define
\begin{eqnarray*}
\xi_n= q_{n}(\xi),\;\;\;\; f_n(t,y,z)-f(t,0,0)+q_n(f(t,0,0)). 
\end{eqnarray*}
Since for each $n\geq 1$, $|q_{n}(x)|\leq n$, $\xi_n$ and $f_n(t,0,0)$ are bounded random variable and hence are square integrable. Therefore, according to Theorem \ref{th1}, there exists a unique process $(Y^n,Z^n)\in \mathcal{S}_0^2\times \mathcal{H}_0^2$ such that
\begin{eqnarray}
	Y^n(t)=\xi_n+\int_t^T f_n(s,Y^n_s,Z^n_s)ds-\int_t^TZ^n(s)dW(s), \;\; 0\leq t\leq T.\label{Fn}
\end{eqnarray}
Moreover thanks to Proposition \ref{prop1}, $(Y^{n},Z^{n})$ is also in $\mathcal{S}^{p}\times \mathcal{H}^{p}$, for all $n\geq 1$. Applying again Proposition \ref{prop1}, for $(n,i)\in\mathbb{N}^{\ast}\times\mathbb{N}$, we have
\begin{eqnarray*}
&&\mathbb{E}\left[\sup_{0\leq t\leq T}|Y^{n+i}(t)-Y^{n}(t)|^{p}+\left(\int_{0}^{T}|Z^{n+i}(s)-Z^{n}(s)|^{2}ds\right)^{\frac{p}{2}}\right]
\\ &\leq&\bar{C}_{p}\mathbb{E}\left[|\xi_{n+i}-\xi_{n}|^{p}+\left(\int_{0}^{T}|q_{n+i}(f(t,0,0))-q_{n}(f(t,0,0))|^{2}dt\right)^{\frac{p}{2}}\right],
\end{eqnarray*}
where $\bar{C}_p$ depends only $p,\; T$ and $K$.
It is clear that the right hand-side of the last inequality tends to $0$ as $n$ tends to $+\infty$, uniformly in $i\in\N$. Therefore $(Y^{n},Z^{n})$ is a Cauchy sequence in Banach space $\mathcal{S}^{p}\times \mathcal{H}^{p}$ and converges to a process $(Y, Z)\in \mathcal{S}^{p}\times \mathcal{H}^{p}$. Finally taking the limit in  \eqref{Fn}, we derive that $(Y,Z)$ satisfies \eqref{A}. All this implies that the BSDE \eqref{A} admit a solution.

{\bf Picard iterative procedure}\\
To use the this method let us recall this needed result which follows from Theorem 4.1 in \cite{Bral}. For this let suppose $f$ do not depend to the state $y_t$ and $z_t$ and consider the BSDE
\begin{eqnarray}\label{Z1}
	Y(t)=\xi+\int^T_t f(s)ds-\int^T_t Z(s)dW(s).
\end{eqnarray} 
\begin{theorem}\label{TQ}
Assume \textbf{(H1)} and $f$ is $p$-integrable. Then BSDE  \eqref{TQ} has as a unique solution in $\mathcal{S}^{p}\times\mathcal{H}^{p}$, for  $p\in(1,2)$.
\end{theorem}
For the general $f$, let $Y^0(t)=Z^0(t)=0$ and define recursively for $n\geq 1$
\begin{eqnarray}\label{Z2}
Y^{n}(t)=\xi+\int^T_t f(s,Y^{n-1}_s,Z^{n-1}_s)ds-\int^T_t Z^{n}(s)dW(s).
\end{eqnarray}
Based on Lipschitz assumption of $f$, we derive easily that a process $g$ defined by $g(t)= f(t, Y^{n-1}_t,Z^{n-1}_t)$, for a given $(Y^{n-1},Z^{n-1})\in\mathcal{S}^p\times\mathcal{H}^{p}$, belongs to $\mathcal{H}^p$. Next, since $\xi$ is $p$-integrable, in view of Theorem \ref{TQ}, BSDE associated with data $\xi$ and $g$ admit a unique solution denoted by $(Y^n,Z^n)$.
Actually let us show that the sequence of processus $(Y^n, Z^n)$ converge. In this fact, setting $\bar{Y}^{n}=Y^{n}-Y^{n-1}$ and  $\bar{Z}^{n}=Z^{n}-Z^{n-1}$, using respectively Doob inequality,  Lipschitz assumption, Fubini's equality and $Z^{n}(t)=0$ if $t<0$ for all $n\geq 1$, we derive
\begin{eqnarray}\label{RT1}
&&\E\left(\sup_{0\leq t\leq T}|\bar{Y}^{n}(t)|^{p}\right)\nonumber\\
&\leq & \left(\frac{p}{p-1}\right)^pT^{p/2}\E\left[\left(\int_{0}^{T}|f(s,Y^{n-1}_s,Z^{n-1}_s)-f(s,Y^{n-2}_s,Z^{n-2})|^2ds\right)^{p/2}\right]\nonumber\\
&\leq & \left(\frac{p}{p-1}\right)^p T^{p/2} K\E\left[\left(\int_0^T\int_{-T}^{0}(|\bar{Y}^{n-1}(s+u)|^2+|\bar{Z}^{n-1}(s+u)|^2)\alpha(du)ds\right)^{p/2}\right]\nonumber\\
&\leq & \left(\frac{p}{p-1}\right)^p T^{p/2}K\E\left[\left(\int_{-T}^{0}\int_{u}^{T+u}(|\bar{Y}^{n-1}(s)|^2+|\bar{Z}^{n-1}(s)|^2)ds\alpha(du)\right)^{p/2}\right]\\
&\leq & \left(\frac{p}{p-1}\right)^p2^{p/2-1}K^{p/2}T^{p/2}\max(T^{p/2},1)\E\left(\sup_{0\leq t \leq T}|\bar{Y}^{n-1}(t)|^p+\left(\int_0^T|\bar{Z}^{n-1}(s)|^2ds\right)^{p/2}\right).\nonumber
\end{eqnarray}
 On the other hand, with the same computation as in the proof of Proposition 3.1, we obtain
 \begin{eqnarray}\label{Ano1}
 \E\left[\left(\int^{T}_0|\bar{Z}^n(s)|^{2}ds\right)^{p/2}\right] &\leq &
 2^{2p-2}\E[(\bar{Y}^n_{*})^p]+2^{2p-2}T^pK^{p/2}\E[(\bar{Y}^{n-1}_{*})^p]\nonumber\\
 &&+2^{2p-2}T^{p/2}K^{p/2}\E\left(\int^{T}_0|\bar{Z}^{n-1}(s)|^{2}ds\right)^{p/2}\nonumber\\
 &&+2^{2p-2}\E\left(\left|\int^{T}_0\bar{Y}^{n}(s)\bar{Z}^{n}(s)dW(s)\right|^{p/2}\right).
 \end{eqnarray}
 But with BDG inequality we have
 \begin{eqnarray*}
\E\left(\left|\int^{T}_0\bar{Y}^{n}\bar{Z}^{n}(s)^{2}dW(s)\right|^{p/2}\right)&\leq &\bar{\lambda}_p \E\left(\left(\int^{T}_0|\bar{Y}^{n}(s)|^2|\bar{Z}^{n}(s)|^{2}ds\right)^{p/4}\right)\nonumber\\
&\leq &\frac{1}{2}\bar{\lambda}_p^2\E[(\bar{Y}^n_{*})^p]+\frac{1}{2}\E\left[\left(\int^{T}_0|\bar{Z}^n(s)|^{2}ds\right)^{p/2}\right]
 \end{eqnarray*}
 which together with \eqref{Ano1} provide
 \begin{eqnarray}\label{Ano2}
 &&\E\left[\left(\int^{T}_0|\bar{Z}^n(s)|^{2}ds\right)^{p/2}\right]\nonumber\\
  &\leq &
 2^{2p-2}(2+\bar{\lambda}_p^2)\E[(\bar{Y}^n_{*})^p]\nonumber\\ &&+2^{2p-1}T^{p/2}K^{p/2}\max(1,T^{p/2})\E\left[(\bar{Y}^{n-1}_{*})^p+\left(\int^{T}_0|\bar{Z}^{n-1}(s)|^{2}ds\right)^{p/2}\right].
 \end{eqnarray}
 Finally, combining \eqref{Ano2} and \eqref{RT1} we get
 \begin{eqnarray*}
 \E\left[\sup_{0\leq t\leq T}|\bar{Y}^{n}(t)|^{p}+\left(\int^{T}_0|\bar{Z}^n(s)|^{2}ds\right)^{p/2}\right]\leq C_p(K,T)\E\left[\sup_{0\leq t\leq T}|\bar{Y}^{n-1}(t)|^{p}+\left(\int^{T}_0|\bar{Z}^{n-1}(s)|^{2}ds\right)^{p/2}\right],
 \end{eqnarray*}
 where 
 \begin{eqnarray}\label{123}
 	C_p(K,T)=2^{2p-1}T^{p/2}K^{p/2}\max(1,T^{p/2})\left[1+2^{p/2-2}(2+\bar{\lambda}_p^2)\left(\frac{p}{p-1}\right)^p\right]\nonumber\\
\end{eqnarray}
 By iteration, we end up with
  \begin{eqnarray}\label{RT3}
 \E\left[\sup_{0\leq t\leq T}|\bar{Y}^{n}(t)|^{p}+\left(\int^{T}_0|\bar{Z}^n(s)|^{2}ds\right)^{p/2}\right]\leq (C_p(K,T))^{n-1}\E\left[\sup_{0\leq t\leq T}|\bar{Y}^{1}(t)|^{p}+\left(\int^{T}_0|\bar{Z}^{1}(s)|^{2}ds\right)^{p/2}\right].\nonumber\\
 \end{eqnarray}
 Since we assume that $K$ and $T$ are small enough such that $C_p(K,T)<1$, \eqref{RT3} yields that $(Y^n,Z^n)_{n\geq 1}$ defined by \eqref{Z2} is a Cauchy sequence on the space $\mathcal{S}^p\times \mathcal{H}^p$, a Banach space. Therefore there exists a $(Y,Z)\in \mathcal{S}^p\times \mathcal{H}^p $ limit of the converging sequence $(Y^n,Z^n)_{n\geq 1}$, solution of BSDE \eqref{A}.  
\end{proof}
\begin{remark}
\begin{itemize}
\item [$(i)$] In Picard's iterative method, the terminology "$K$ and $T$ small enough" results in la condition \eqref{123}. While in the $L^{\infty}$-approximation method the terminology "$K$ and $T$ small enough" satisfies \eqref{Cond2} and \eqref{Cond3}.
\item [$(ii)$] It is also useful to note that the conditions \ \eqref{Cond2} and \eqref{Cond3bis} are sufficient conditions for the existence and uniqueness of BSDEs \ eqref {A}. Whereas the condition \eqref{123} is a necessary and sufficient condition of existence and uniqueness of this same BSDE.
\end{itemize}
At the end of this study, we can affirm that the condition of existence on the Lipschitz constant and the terminal time depends on the method used.
\end{remark}

\label{lastpage-01}
\end{document}